\documentclass[12pt]{amsart}
\topmargin -1.3cm
\textwidth 160mm
\textheight 230mm
\oddsidemargin 0.70cm
\evensidemargin 0.70cm
\usepackage[utf8]{inputenc}
\usepackage{graphicx}
\usepackage{epstopdf}
\usepackage{inputenc}
\usepackage{amsmath}
\usepackage{tikz}
\usepackage{xcolor}
\usepackage{amssymb}
\newtheorem{theorem}{Theorem}
\newtheorem*{theorem*}{Theorem}

\newtheorem{proposition}{Proposition}

\newtheorem{lemma}{Lemma}
\newtheorem{conjecture}{Conjecture}
\newtheorem*{acknowledgements*}{Acknowledgements}

\makeatletter
\def\blfootnote{\gdef\@thefnmark{}\@footnotetext}
\makeatother

\def\house#1{\setbox1=\hbox{$\,#1\,$}%
\dimen1=\ht1 \advance\dimen1 by 2pt \dimen2=\dp1 \advance\dimen2 by 2pt
\setbox1=\hbox{\vrule height\dimen1 depth\dimen2\box1\vrule}%
\setbox1=\vbox{\hrule\box1}%
\advance\dimen1 by .4pt \ht1=\dimen1
\advance\dimen2 by .4pt \dp1=\dimen2 \box1\relax}

\begin{document}
\title[On the simultaneous $3$-divisibility of class numbers]{On the simultaneous $3$-divisibility of class numbers of triples of imaginary quadratic fields}
\author{JAITRA CHATTOPADHYAY and SUBRAMANI MUTHUKRISHNAN}
\address[Jaitra Chattopadhyay]{Harish-Chandra Research Institute, HBNI, 
Chhatnag Road, Jhunsi, Allahabad - 211019, INDIA.}

  \address[Subramani Muthukrishnan]{Indian Institute of Information Technology Design and Manufacturing Kancheepuram, Vandalur-Kelambakkam Road, Chennai - 600127, India.}
\email[Jaitra Chattopadhyay]{chat.jaitra@gmail.com}
\email[Subramani Muthukrishnan]{subramaniat2010@gmail.com}

\begin{abstract}
Let $k \geq 1$ be a cube-free integer with $k \equiv 1 \pmod {9}$ and $\gcd(k, 7\cdot 571)=1$. 
We prove the existence of infinitely many triples of imaginary quadratic fields 
$\mathbb{Q}(\sqrt{d})$, $\mathbb{Q}(\sqrt{d+1})$ and $\mathbb{Q}(\sqrt{d+k^2})$ with $d \in \mathbb{Z}$ 
such that the class number of each of them is divisible by $3$. 
This affirmatively answers a weaker version of a conjecture of Iizuka \cite{iizuka-jnt}. 
%
%
\end{abstract}
\maketitle

\section{Introduction}

For a number field $K$, we denote the ideal class group of $K$ by $Cl_{K}$ and the class number by $h_{K}$. 

Ankeny and Chowla \cite{AC} and Nagell \cite{nagell}, among others, proved the existence of infinitely many imaginary 
quadratic fields with class numbers divisible by a given positive integer $n$. 
Later, Weinberger \cite{berger}, Yamamoto \cite{moto} and several other mathematicians proved the analogue for real quadratic fields.


The problem of simultaneous divisibility of class numbers of tuples of quadratic fields was considered by
Scholz \cite{scholz}, who proved that for a square-free integer $d \geq 2$, if $h_{\mathbb{Q}(\sqrt{d})}$ 
is divisible by $3$, then so is $h_{\mathbb{Q}(\sqrt{-3d})}$. 
In \cite{komatsu-acta}, Komatsu extended Scholz's $3$-divisibility result for $\mathbb{Q}(\sqrt{d})$ and 
$\mathbb{Q}(\sqrt{md})$ for a non-zero integer $m$ and in 
\cite{komatsu-ijnt}, he generalized the theorem for the divisibility of class numbers by a 
given integer $n \geq 2$ for pairs of imaginary quadratic fields.

Very recently, Iizuka \cite{iizuka-jnt} considered a slight variant of the problems considered in \cite{komatsu-acta} and \cite{komatsu-ijnt} 
(see also \cite{iizuka1} and \cite{ikn-preprint}). He proved that there exist infinitely many imaginary quadratic fields 
$\mathbb{Q}(\sqrt{d})$ and $\mathbb{Q}(\sqrt{d+1})$ with $d \in \mathbb{Z}$ and class numbers divisible by $3$. 
Moreover, in \cite{iizuka-jnt}, Iizuka made the following conjecture.

\begin{conjecture} \cite{iizuka-jnt}\label{conj}
Let $m \geq 1$ be an integer and let $\ell \geq 3$ be a prime number. Then there exist infinitely many tuples $\{\mathbb{Q}(\sqrt{d}), \mathbb{Q}(\sqrt{d + 1}), \ldots ,\mathbb{Q}(\sqrt{d + m})\}$ of quadratic fields with $d \in \mathbb{Z}$ such that $\ell$ divides the class numbers of all of them.
\end{conjecture}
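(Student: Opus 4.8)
The plan is to reduce the simultaneous $\ell$-divisibility to a single Diophantine representation problem and then to solve that problem parametrically. The engine is the classical criterion that produces an ideal class of order $\ell$: if $K=\mathbb{Q}(\sqrt{-N})$ with $N>0$ and one has a factorization $x^{2}+N=y^{\ell}$ in $\mathbb{Z}$ with $y>1$ and with $y$ coprime to $2Nx$ (so that the ideals $(x+\sqrt{-N})$ and $(x-\sqrt{-N})$ are coprime), then the ideal $\mathfrak{a}=(y,\,x+\sqrt{-N})$ satisfies $\mathfrak{a}^{\ell}=(x+\sqrt{-N})$, whence $[\mathfrak{a}]$ has order dividing the prime $\ell$; moreover $[\mathfrak{a}]\neq 1$ whenever $N(\mathfrak{a})=y$ is small compared with $\sqrt{N}$, since then any generator would have to be a rational integer, a contradiction. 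The crucial feature is that $\mathbb{Q}(\sqrt{-N})$ depends only on the squarefree part of $N$, so the criterion never requires $N$ itself to be squarefree; this robustness is what lets us treat consecutive shifts uniformly.

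Restricting to $d<-m$ so that every field in the tuple is imaginary, write $n=-d$ and $N_{i}=n-i$ for $0\le i\le m$. Then $\ell\mid h_{\mathbb{Q}(\sqrt{d+i})}$ for all $i$ will follow once we solve the system
\[
x_{i}^{2}+(n-i)=y_{i}^{\ell}\qquad(0\le i\le m)
\]
in integers with $y_{i}>1$ and the coprimality above, i.e. once we realize the $m+1$ consecutive integers $n,n-1,\dots,n-m$ simultaneously as differences $y^{\ell}-x^{2}$ of an $\ell$-th power and a square. To obtain infinitely many tuples I would seek a one-parameter polynomial solution: polynomials $n(t),x_{i}(t),y_{i}(t)\in\mathbb{Z}[t]$ satisfying the system identically in $t$. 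In any such family $y_{i}(t)^{\ell}\sim n(t)$ forces $\deg y_{i}=\tfrac{1}{\ell}\deg n\le\tfrac13\deg n<\tfrac12\deg n$, so the nontriviality bound $y_{i}\ll\sqrt{N_{i}}$ holds automatically for large $t$; specializing $t$ to integers then yields infinitely many admissible $d$, and the tuples are genuinely distinct for infinitely many $t$ because the squarefree parts of the $N_{i}(t)$ tend to infinity. With this in place the order-$\ell$ criterion, the nontriviality estimate, and the distinctness of the fields are all routine.

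Everything therefore hinges on the algebraic heart: constructing, for each $m$ and each odd prime $\ell$, polynomials making $y_{i}(t)^{\ell}-x_{i}(t)^{2}$ run through $m+1$ consecutive values. Subtracting consecutive equations turns this into
\[
\bigl(y_{i+1}(t)^{\ell}-x_{i+1}(t)^{2}\bigr)-\bigl(y_{i}(t)^{\ell}-x_{i}(t)^{2}\bigr)=-1\qquad(0\le i<m),
\]
that is, $m$ simultaneous representations of $\pm1$ by the form $(\ell\text{-th power})-(\text{square})$ along a common parameter. I expect this to be the main obstacle, and the point at which the full conjecture is genuinely open. The case $m=1$ can be settled by a single such identity, and with more effort one can handle non-consecutive shift patterns such as $0,1,k^{2}$ by exploiting the extra freedom carried by $k$ — this is the kind of mechanism responsible for the congruence conditions $k\equiv1\pmod 9$ and $\gcd(k,7\cdot571)=1$ in the present paper — but forcing the shifts to be exactly $0,1,\dots,m$ removes that freedom, and the variety cut out by the system above rapidly acquires high codimension and positive genus. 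The most promising line of attack is to abandon the naive one-parameter ansatz in favour of locating a rational curve, or a positive-rank elliptic fibration, on this variety, which would furnish the required infinitely many specializations at once; establishing its unirationality for all $(m,\ell)$ — or even for all $m$ with $\ell=3$ — is the decisive and, at present, unresolved step.
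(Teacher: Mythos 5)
You were asked to prove Conjecture \ref{conj}, but this statement is presented in the paper as an \emph{open conjecture} of Iizuka: the paper contains no proof of it, and Theorem \ref{main-theorem} establishes only a strictly weaker variant ($\ell = 3$, triples with shifts $0$, $1$, $k^{2}$ rather than arbitrary consecutive runs $0,1,\dots,m$). Your proposal, as you yourself concede in its final paragraph, is not a proof: after reducing the conjecture to the simultaneous polynomial representation of $m+1$ consecutive integers as $y_{i}^{\ell}-x_{i}^{2}$, you leave that system unsolved and correctly identify it as ``the decisive and, at present, unresolved step.'' That reduction is essentially a restatement of the conjecture in Diophantine form, not progress toward it, so the gap is the entire content of the statement. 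There are also smaller unaddressed points in your setup: nontriviality of the class $[\mathfrak{a}]$ in the regime $y \ll \sqrt{N}$ forces any generator to be rational, which then requires $y$ not to be a perfect square (a condition your ansatz does not impose); when $-N \equiv 1 \pmod 4$ the generator may be half-integral, needing the usual extra care; and one must rule out $d+i$ being a square or the $N_{i}$ sharing squarefree parts across specializations beyond the brief remark you give.

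It is also worth correcting your guess about the paper's mechanism. The paper does not use the Nagell--Ankeny--Chowla criterion $x^{2}+N=y^{\ell}$ at all; for its weaker theorem it constructs unramified cyclic cubic extensions as splitting fields of the cubics $f_{t}(X)=X^{3}-27tX-k$, checking via the Llorente--Nart conditions (Lemma \ref{lemma-taken-from-kishi-miyake}) that no prime is totally ramified in the cubic field, and then transfers $3$-divisibility from real to imaginary quadratic fields by Scholz's reflection theorem (Theorem \ref{scholz-reflection}). The simultaneity comes from the single parameter $t_{n}=n(3888n^{2}+108n+1)$ with $n \equiv 2 \pmod 9$ and $n \equiv 1 \pmod k$, which makes $d=-2916t_{n}^{3}$ work for all three shifts at once; the hypothesis $k \equiv 1 \pmod 9$ enters the $p=3$ ramification conditions (ii)--(iii) with $b=k$, and $7\cdot 571 = 3997 = 3888+108+1$ appears because $t_{n} \equiv 3997 \pmod k$, so $\gcd(k,3997)=1$ guarantees $v_{q}(27t_{n}) = 0$ for primes $q \mid k$, blocking total ramification. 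So the congruence conditions on $k$ arise from ramification control in a cubic field, not from a power-minus-square identity as you surmised. In short: your strategy is a legitimate and classical alternative route to $\ell$-divisibility, but for this statement it terminates at exactly the open problem, and no proof exists in the paper to compare it against.
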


In this paper, we extend Iizuka's result from pairs to certain triples of imaginary quadratic fields. 
This addresses a weaker version of Conjecture \ref{conj} for $\ell = 3$. To the best of our knowledge, 
this is the first result in the direction of the simultaneous divisibility of class numbers of quadratic fields, 
taken three at a time. The precise statement of our main theorem is as follows.

\begin{theorem}\label{main-theorem}
Let $k \geq 1$ be a cube-free integer such that $k \equiv 1 \pmod 9$ and $\gcd(k, 7\cdot 571)=1$. Then there exist infinitely many triples of imaginary quadratic fields $\mathbb{Q}({\sqrt{d}})$, $\mathbb{Q}({\sqrt{d+1}})$ and $\mathbb{Q}({\sqrt{d+k^2}})$ with $d \in \mathbb{Z}$ such that $3$ divides each of $h_{\mathbb{Q}(\sqrt{d})}$, $h_{\mathbb{Q}(\sqrt{d + 1})}$ and $h_{\mathbb{Q}(\sqrt{d + k^{2}})}$.
\end{theorem}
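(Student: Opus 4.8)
The plan is to reduce the theorem to a single Diophantine statement via the classical cube-minus-square criterion for $3$-divisibility, and then to build an explicit one-parameter family realizing the three required representations at once. First I would record the criterion in the form I need. Let $N$ be a positive integer that is not a perfect square, and suppose there are integers $a, c$ with $c > 1$, $\gcd(a, c) = 1$, $N > c$, and $a^2 + N = c^3$. Factoring $(a + \sqrt{-N})(a - \sqrt{-N}) = c^3$ in the ring of integers of $\mathbb{Q}(\sqrt{-N})$, coprimality forces the two conjugate ideals to be coprime, so $(a + \sqrt{-N}) = \mathfrak{c}^3$ for an ideal $\mathfrak{c}$ of norm $c$; hence $[\mathfrak{c}]$ has order dividing $3$. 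Its order is exactly $3$: if $\mathfrak{c} = (\beta)$ were principal then $c$ would be a value of the principal norm form, and since $N > c$ this would force $c$ to be a perfect square, contrary to hypothesis. Thus $3 \mid h_{\mathbb{Q}(\sqrt{-N})}$. I would state this with the $\tfrac{a + \sqrt{-N}}{2}$-normalization to cover all residues of $N$ modulo $4$, and note that the field depends only on $d$ (not on the squarefree part of the radicand), so it suffices to exhibit such a representation for each of $N \in \{-d, -(d+1), -(d+k^2)\}$.

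This reduces the theorem to producing, for infinitely many negative integers $d$, a simultaneous system
\begin{align*}
-d = c_1^3 - a_1^2, \quad -(d+1) = c_2^3 - a_2^2, \quad -(d+k^2) = c_3^3 - a_3^2,
\end{align*}
with every $c_i$ a non-square exceeding $1$, every $\gcd(a_i, c_i) = 1$, and every radicand positive (so that all three fields are imaginary). Writing $N_1 = -d$, the gap conditions become the two polynomial constraints $c_1^3 - a_1^2 - (c_2^3 - a_2^2) = 1$ and $c_1^3 - a_1^2 - (c_3^3 - a_3^2) = k^2$, i.e. $N_1$, $N_1 - 1$ and $N_1 - k^2$ must all be of cube-minus-square shape. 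To build these I would use a near-miss ansatz in which the cube root sits near a square and the square root near a cube, $c(t) \approx t^2$ and $a(t) \approx t^3$, and write $c_i = c(t) + \gamma_i$, $a_i = a(t) + \alpha_i$ with small integer corrections. Expanding, $c_i^3 - a_i^2 = (c^3 - a^2) + (3c^2\gamma_i - 2a\,\alpha_i) + (\text{lower order})$, and the plan is to choose the corrections so that $N_1(t), N_2(t), N_3(t)$ share the same top-degree part and differ only in their constant terms, by exactly $1$ and $k^2$. Because $3c(t)^2$ and $2a(t)$ are fixed polynomials, matching these as identities in $t$ collapses to a finite set of linear and quadratic conditions on the $\gamma_i, \alpha_i$, into which the prescribed constants $1$ and $k^2$ enter only through the lowest-order terms.

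The main obstacle is precisely this construction: arranging both gaps to hold identically in $t$ while keeping each $c_i$ a non-square and each $\gcd(a_i, c_i)$ trivial. I expect the arithmetic hypotheses on $k$ to be forced at exactly this point. The congruence $k \equiv 1 \pmod 9$ should guarantee solvability of the relevant cube/residue condition at the prime $3$, which is the distinguished prime here since it governs both the ramification and the cube structure underlying the criterion; and $\gcd(k, 7 \cdot 571) = 1$ should excise the primes at which the resultant or discriminant of the defining polynomials degenerates, so that the correction system remains solvable over $\mathbb{Z}$ and the three ideal classes are genuinely of order $3$ rather than being accidentally trivialized. In particular $7$ and $571$ are most naturally read as the prime factors of a fixed resultant controlling $\gcd(a_i(t), c_i(t))$ and the non-principality, which is why they must be kept out of $k$.

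Once the family $d = d(t)$ is in hand, the remaining steps are routine. Positivity of each $N_i(t)$ for large $t$ makes all three fields imaginary; the inequality $N_i > c_i$ (immediate since $N_i$ grows faster than $c_i$) together with $c_i$ lying strictly between consecutive squares secures the non-principality, hence order exactly $3$; and $\gcd(a_i(t), c_i(t)) = 1$ holds for all $t$ outside the finitely many residues dividing the resultant, so it holds on a full arithmetic progression. Finally, since $|d(t)| \to \infty$, any fixed field $\mathbb{Q}(\sqrt{d})$ arises for only finitely many $t$, so infinitely many admissible $t$ yield infinitely many distinct triples; a squarefree-type restriction on $t$ (or a direct check on the maximal order) ensures the radicands define the intended fields. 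Assembling the $3$-divisibility criterion with this family then gives the theorem.
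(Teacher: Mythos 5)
There is a genuine gap, and it sits exactly where the theorem lives. Your reduction to the simultaneous system $-d = c_1^3 - a_1^2$, $-(d+1) = c_2^3 - a_2^2$, $-(d+k^2) = c_3^3 - a_3^2$ (via the Ankeny--Chowla/Nagell cube-minus-square criterion) is a legitimate plan, but you never construct the family: the ``near-miss ansatz'' $c_i = c(t) + \gamma_i$, $a_i = a(t) + \alpha_i$ is only asserted to ``collapse to a finite set of linear and quadratic conditions,'' with no demonstration that this system has integer solutions. Killing all nonconstant terms of $\bigl(c_i^3 - a_i^2\bigr) - \bigl(c_j^3 - a_j^2\bigr) = 3c^2(\gamma_i-\gamma_j) - 2a(\alpha_i - \alpha_j) + \cdots$ as identities in $t$ while leaving residual constants exactly $1$ and $k^2$ \emph{is} the hard Diophantine content of the theorem, and your own phrasing (``I expect the arithmetic hypotheses on $k$ to be forced at exactly this point'') concedes that the core argument is missing. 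The paper resolves this by a different mechanism entirely: it sets $d = -2916\,t_n^3$ with $t_n = n(3888n^2 + 108n + 1)$, $n \equiv 2 \pmod 9$, $n \equiv 1 \pmod k$, so that $d$, $d+1$ and $d+k^2$ each fall into a family where $3$-divisibility is proved by exhibiting an \emph{unramified cyclic cubic extension} of $\mathbb{Q}(\sqrt{D(f)})$ for an explicit cubic $f$ (Lemmas \ref{proved-n-kishi-miyake} and \ref{lemma-taken-from-kishi-miyake}, Propositions \ref{1st-class-number-divisibility} and \ref{2nd-class-number-divisibility}, with $f_{t_n}(X) = X^3 - 27 t_n X - k$ handling $d + k^2$), followed by Scholz reflection (Theorem \ref{scholz-reflection}) to pass from real to imaginary fields. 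No cube-minus-square representation is ever used.

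Your guesses about the hypotheses on $k$ are also not what actually happens, which matters because you lean on them to paper over the missing construction. The constant $7 \cdot 571 = 3997 = 3888 + 108 + 1$ is simply the value of $t_n$ modulo $k$ when $n \equiv 1 \pmod k$; the hypothesis $\gcd(k, 7\cdot 571)=1$ then guarantees $\gcd(t_n, k) = 1$, so no prime $q \neq 3$ dividing $k = b$ satisfies $1 \leq v_q(b) \leq v_q(a)$ for $a = 27t_n$, i.e.\ no such $q$ is totally ramified in the cubic field. Similarly $k \equiv 1 \pmod 9$ (with $n \equiv 2 \pmod 9$, so $27 \mid a$) forces $b^2 \equiv 1 \equiv a + 1 \pmod 9$, making condition (ii) of Lemma \ref{lemma-taken-from-kishi-miyake} fail at $p = 3$; and cube-freeness of $k$ is the standing hypothesis $v_p(b) < 3$ of that lemma. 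None of these is a resultant condition controlling $\gcd(a_i(t), c_i(t))$ in your sense. Finally, even granting a family, your criterion is more brittle than you allow: the ideal-cube argument producing an order-$3$ class in $Cl_{\mathbb{Q}(\sqrt{-N})}$ requires the radicand to be squarefree (or a careful comparison of $\mathbb{Z}[\sqrt{-N}]$ with the maximal order) and coprimality of the conjugate ideals at $2$; for parametric radicands such as $2916t^3 - k^2$, squarefreeness is not available and is nontrivial to arrange, whereas the paper's unramified-extension method needs only that $D(f)$ not be a perfect square, which is disposed of once and for all by Siegel's theorem.
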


\section{Preliminaries}

For any number field $K$ and prime $p$, the $p$-rank of $K$, denoted by ${\rm{rk}}_{p}(Cl_{K})$, is the dimension of 
$Cl_{K}/pCl_{K}$ as a $\mathbb{Z}/p\mathbb{Z}$ vector space.

For $p=3$, Scholz \cite{scholz} proved the following ``reflection" principle for the $3$-ranks of the class groups of quadratic fields.

%
%

\begin{theorem} \cite{scholz}\label{scholz-reflection}
Let $d > 0$ be a square-free integer. If $r$ and $s$ are the $3$-ranks of the class groups of $\mathbb{Q}(\sqrt{d})$ and 
$\mathbb{Q}(\sqrt{-3d})$, respectively, then $r \leq s \leq r+1$. In particular, if $3$ divides $h_{\mathbb{Q}(\sqrt{d})}$, 
then $3$ also divides $h_{\mathbb{Q}(\sqrt{-3d})}$.
\end{theorem}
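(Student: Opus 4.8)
The plan is to reduce the statement to a comparison of eigenspaces of the $3$-class group of the single field $L = \mathbb{Q}(\sqrt{d},\zeta_3)$, where $\zeta_3$ is a primitive cube root of unity; the point of enlarging to $L$ is that it contains $\mu_3$, so that cubic extensions are governed by Kummer theory, and it contains all three quadratic fields $\mathbb{Q}(\sqrt{d})$, $\mathbb{Q}(\sqrt{-3})$ and $\mathbb{Q}(\sqrt{-3d})$. First I would fix $\Delta = \mathrm{Gal}(L/\mathbb{Q}) \cong (\mathbb{Z}/2\mathbb{Z})^2$, generated by the automorphism $\sigma$ that fixes $\zeta_3$ and negates $\sqrt{d}$ and by complex conjugation $\tau$, which fixes $F_1 := \mathbb{Q}(\sqrt{d})$ and inverts $\zeta_3$. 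Attaching to each quadratic subfield the quadratic character of $\Delta$ whose kernel is the fixing subgroup, I get $\chi_1$ for $F_1$, the mod-$3$ cyclotomic character $\omega$ for $\mathbb{Q}(\sqrt{-3})$, and $\chi_2$ for $F_2 := \mathbb{Q}(\sqrt{-3d})$; evaluating on $\sigma$ and $\tau$ gives the crucial identity $\chi_2 = \omega\chi_1$. Because $[L:F_i] = 2$ is prime to $3$, the natural map $Cl_{F_i} \to Cl_L$ induced by extension of ideals identifies the $3$-part of $Cl_{F_i}$ with the $\mathrm{Gal}(L/F_i)$-invariants of the $3$-part of $Cl_L$, and splitting off the trivial character (which descends to $\mathbb{Q}$ and so vanishes) yields $r = \dim_{\mathbb{F}_3}(Cl_L/3Cl_L)^{\chi_1}$ and $s = \dim_{\mathbb{F}_3}(Cl_L/3Cl_L)^{\chi_2}$.

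Next I would invoke the reflection itself. Let $H$ be the maximal everywhere-unramified abelian extension of $L$ of exponent $3$; note $L$ is totally imaginary, so there are no archimedean conditions to track. Class field theory gives a $\Delta$-equivariant isomorphism $\mathrm{Gal}(H/L) \cong Cl_L/3Cl_L$, and Kummer theory writes $H = L(\sqrt[3]{W})$ for a finite subgroup $W \subseteq L^\times/(L^\times)^3$, together with the perfect pairing
\[
\mathrm{Gal}(H/L) \times W \longrightarrow \mu_3, \qquad (g, w) \longmapsto g(\sqrt[3]{w})/\sqrt[3]{w}.
\]
Since $\Delta$ acts on $\mu_3$ through $\omega$, this pairing is equivariant up to the cyclotomic twist, so the $\chi$-eigenspace of $\mathrm{Gal}(H/L)$ is dual to the $\omega\chi^{-1}$-eigenspace of $W$. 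As all characters in sight are quadratic, $\omega\chi^{-1} = \omega\chi$, and I obtain $\dim_{\mathbb{F}_3}(Cl_L/3Cl_L)^{\chi} = \dim_{\mathbb{F}_3} W^{\omega\chi}$ for every $\chi$. Using $\omega\chi_1 = \chi_2$ and $\omega\chi_2 = \chi_1$, this is exactly the Spiegelung that interchanges the two fields, and it translates the previous paragraph into $s = \dim_{\mathbb{F}_3} W^{\chi_1}$ and $r = \dim_{\mathbb{F}_3} W^{\chi_2}$.

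Finally I would bound $W$ eigenspace by eigenspace against the larger Kummer module $V = \{\alpha \in L^\times : (\alpha) \text{ is the cube of an ideal}\}/(L^\times)^3$. Any $\alpha$ for which $L(\sqrt[3]{\alpha})/L$ is unramified must have $3 \mid v_{\mathfrak p}(\alpha)$ at every prime $\mathfrak p$ — at $\mathfrak p \nmid 3$ by tameness and at $\mathfrak p \mid 3$ because a valuation that is not a multiple of $3$ already forces ramification — so $W \subseteq V$ and therefore $\dim W^\chi \le \dim V^\chi$ for each $\chi$. The exact sequence $0 \to U \to V \to Cl_L[3] \to 0$ with $U = \mathcal{O}_L^\times/(\mathcal{O}_L^\times)^3$ is $\Delta$-stable and splits over $\mathbb{F}_3[\Delta]$ since $3 \nmid |\Delta|$, giving $\dim V^\chi = \dim U^\chi + \dim (Cl_L[3])^\chi$. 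The unit term is the only computation required: by Dirichlet's theorem $\mathcal{O}_L^\times$ has free rank $1$, and the permutation action of $\Delta$ on the two complex places of $L$ forces $\mathcal{O}_L^\times \otimes \mathbb{Q}$ to realize the character $\chi_1$ (concretely, the fundamental unit descends from the real field $F_1$), while the $3$-torsion contributes the class of $\zeta_3$ in the $\omega$-component; hence $\dim U^{\chi_1} = 1$ and $\dim U^{\chi_2} = 0$. Since $Cl_L[3]$ and $Cl_L/3Cl_L$ share the same $\chi$-eigenspace dimensions, I conclude
\[
s = \dim W^{\chi_1} \le \dim V^{\chi_1} = 1 + r, \qquad r = \dim W^{\chi_2} \le \dim V^{\chi_2} = 0 + s,
\]
that is, $r \le s \le r + 1$.

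The main obstacle, and the conceptual core, is the $\Delta$-equivariant Kummer duality together with its cyclotomic twist: it is this twist that converts the $\chi_1$-data into $\chi_2$-data and makes the reflection work, and getting the equivariance and the perfectness of the pairing exactly right is the crux. By contrast the unit computation is short, and it is precisely there that the asymmetry of the two bounds originates — the surplus $+1$ on one side versus $+0$ on the other is nothing but the statement that the units occupy the $\chi_1$-eigenspace and not the $\chi_2$-eigenspace, which is the algebraic shadow of $F_1$ being real and $F_2$ imaginary. The displayed divisibility statement is then immediate: if $3 \mid h_{\mathbb{Q}(\sqrt{d})}$ then $r \ge 1$, so $s \ge r \ge 1$ and $3 \mid h_{\mathbb{Q}(\sqrt{-3d})}$.
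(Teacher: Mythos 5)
The paper offers no proof of this statement at all: it is quoted verbatim from Scholz's 1932 paper, so the only meaningful comparison is with the classical argument, and your proposal \emph{is} that classical argument --- the eigenspace form of the Spiegelungssatz over $L=\mathbb{Q}(\sqrt{d},\zeta_3)$, essentially as in Scholz's original paper and as reproduced, e.g., in Theorem 10.10 of Washington's \emph{Introduction to Cyclotomic Fields}. I checked the steps and they are sound: the identification $r=\dim(Cl_L/3Cl_L)^{\chi_1}$, $s=\dim(Cl_L/3Cl_L)^{\chi_2}$ is correct because $[L:F_i]$ and $[L:\mathbb{Q}]$ are prime to $3$ (extension followed by norm is multiplication by the degree, which kills both the kernel and the trivial eigenspace); the $\omega$-twisted equivariance of the Kummer pairing is stated correctly and is indeed the crux, using that $H/\mathbb{Q}$ is Galois since $H$ is canonical; the inclusion $W\subseteq V$ needs only the one direction you prove ($3\nmid v_{\mathfrak{p}}(\alpha)$ forces ramification, at $\mathfrak{p}\mid 3$ as well, since the valuation of $\sqrt[3]{\alpha}$ becomes non-integral); and the splitting of $0\to U\to V\to Cl_L[3]\to 0$ over $\mathbb{F}_3[\Delta]$ is legitimate by semisimplicity. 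Two small points deserve flags, neither a gap. First, your parenthetical that ``the fundamental unit descends from $F_1$'' is only true up to the Hasse unit index $Q=[\mathcal{O}_L^\times:\mu_L\mathcal{O}_{F_1}^\times]\in\{1,2\}$; since $2$ is invertible mod $3$, the image of $\mathcal{O}_{F_1}^\times$ still spans the non-torsion part of $U$, so $\dim U^{\chi_1}=1$ and $\dim U^{\chi_2}=0$ stand, and your representation-theoretic justification via the permutation action on the archimedean places is the cleaner route anyway. Second, two degenerate inputs: $d=1$ collapses $L$ to degree $2$ (the statement is then trivially true, with $r=s=0$), and if $3\mid d$ one should read $\mathbb{Q}(\sqrt{-3d})=\mathbb{Q}(\sqrt{-d/3})$; this is still the fixed field of $\ker\chi_2$ inside $L$, so your character bookkeeping goes through unchanged. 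With those remarks recorded, the proof is complete and is precisely the argument behind the cited theorem, including the correct diagnosis that the $+1$ asymmetry is the shadow of $F_1$ being real and $F_2$ imaginary.
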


%
%
%
%
%
%

\medskip

The proof of Theorem \ref{main-theorem} is based on constructing unramified cyclic cubic extensions of certain quadratic fields. 
Suppose $f(X) \in \mathbb{Z}[X]$ is an irreducible polynomial of degree $3$ whose discriminant $D(f)$ is not a perfect square
and let $E$ be the splitting field of $f(x)$ over $\mathbb{Q}$.
Then $E$ is Galois over both $\mathbb{Q}$ and the quadratic field $F=\mathbb{Q}(\sqrt{D(f)})$.  
Since $E$ is a cyclic cubic extension of $F$, if also $E$ is unramified over $F$ then, by class field theory,
$3$ divides the class number of $F$.

Since $E$ is a Galois extension of $F$ of odd degree, $E$ is unramified at the infinite primes of $F$.  For the 
finite primes, we have the following 

\medskip

\begin{lemma} \cite{kishi-miyake}\label{proved-n-kishi-miyake}
Let $f(X)\in \mathbb{Z}[X]$ be a cubic irreducible polynomial and let $E$ be the splitting field of $f$ over $\mathbb{Q}$. 
Assume that $D(f)$ is not a perfect square and let $F=\mathbb{Q}(\sqrt{D(f)})$. For a prime number $p$, let $\wp_{F}$ be a 
prime ideal in $\mathcal{O}_{F}$ lying above $p$. Let $\alpha$ be a root of $f$ and let $K = \mathbb{Q}(\alpha)$. 
Then $\wp_{F}$ is ramified in $E$ if and only if $p$ is totally ramified in $K$.
\end{lemma}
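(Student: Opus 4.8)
The plan is to translate both ramification conditions into a single condition on the inertia group inside $\mathrm{Gal}(E/\mathbb{Q})$. Since $D(f)$ is not a perfect square, the splitting field $E$ satisfies $\mathrm{Gal}(E/\mathbb{Q}) \cong S_3$, and under this identification the quadratic field $F = \mathbb{Q}(\sqrt{D(f)})$ is the fixed field of the unique order-$3$ subgroup $A_3 \trianglelefteq S_3$, while the cubic field $K = \mathbb{Q}(\alpha)$ is the fixed field of the transposition subgroup $H = \mathrm{Stab}(\alpha) \cong \mathbb{Z}/2\mathbb{Z}$. I would fix a prime $\mathfrak{P}$ of $\mathcal{O}_E$ lying above $\wp_F$ (hence above $p$) and let $I \subseteq S_3$ denote its inertia group in $E/\mathbb{Q}$. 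The whole lemma then follows by showing that each of the two conditions in the statement is equivalent to the single group-theoretic condition $A_3 \subseteq I$.

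For the field $F$: the extension $E/F$ is cyclic of prime degree $3$, and the inertia group of $\mathfrak{P}$ over $\wp_F$ in $E/F$ is $I \cap \mathrm{Gal}(E/F) = I \cap A_3$. Because $A_3 \cong \mathbb{Z}/3\mathbb{Z}$ has no proper nontrivial subgroup, this intersection is either trivial or all of $A_3$. Hence $\wp_F$ ramifies in $E$ if and only if $I \cap A_3 \neq \{1\}$, that is, if and only if $A_3 \subseteq I$.

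For the field $K$: I would invoke the standard description of the splitting of $p$ in the non-normal subfield $K = E^H$ in terms of the action of the decomposition and inertia groups on the coset space $S_3/H$, equivalently on the three roots $\alpha_1, \alpha_2, \alpha_3$ of $f$. The primes of $\mathcal{O}_K$ above $p$ correspond to the orbits of the decomposition group, and within a given orbit the $I$-orbits all have a common length, which is the ramification index of the associated prime. Thus $p$ is totally ramified in $K$, i.e. there is a single prime with $e = [K:\mathbb{Q}] = 3$, precisely when $I$ acts transitively on the three roots. Since a subgroup of $S_3$ acts transitively on a set of three points if and only if its order is divisible by $3$, and the only subgroups of $S_3$ of order divisible by $3$ are $A_3$ and $S_3$ (both containing $A_3$), this occurs if and only if $A_3 \subseteq I$.

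Combining the two equivalences proves the lemma. Since $A_3$ is normal in $S_3$, the condition $A_3 \subseteq I$ is invariant under conjugation and hence independent of the choice of $\mathfrak{P}$ above $p$; this guarantees that the statement is well posed, i.e. independent of the chosen $\wp_F$ and of the labelling of $K$ among its conjugates. I expect the only delicate point to be the justification of the orbit-theoretic splitting description for $K$ in the possibly wildly ramified cases $p = 2$ and $p = 3$; the argument is robust there because it uses only the divisibility $3 \mid |I|$, which is insensitive to whether the ramification is tame or wild.
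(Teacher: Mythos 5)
Your proof is correct and takes essentially the same approach as the paper, which disposes of the lemma in one sentence by observing that both conditions are equivalent to $3$ dividing the ramification index of $p$ in $E$; your condition $A_3 \subseteq I$ is exactly that divisibility restated group-theoretically, since a subgroup of $S_3$ has order divisible by $3$ if and only if it contains $A_3$. You simply supply the details the paper omits (the inertia-group computation in $E/F$, the orbit description of splitting in the non-normal cubic $K$, and the conjugation-invariance ensuring well-posedness), and these are all handled correctly, including the wild cases, because $|I| = e$ holds for Galois extensions regardless of tameness.
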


This lemma, stated in \cite{kishi-miyake} without proof, follows easily since both conditions are equivalent to
3 dividing the ramification index of $p$ in $E$.

\medskip


%



By Lemma \ref{proved-n-kishi-miyake}, questions of which primes ramify from $F$ to $E$ are reduced to questions of which
primes $p$ ramify totally from $\mathbb{Q}$ to the cubic field defined by a root of $f(x)$.  The next lemma 
(as presented in \cite{kishi-miyake} as a consequence of Theorem $1$ in \cite{nart}) answers such
questions for certain polynomials $f(x)$.
As usual, for a prime $p$ the $p$-adic valuation of the integer $n$ is denoted $v_{p}(n)$.

\begin{lemma} \cite{kishi-miyake}\label{lemma-taken-from-kishi-miyake}
Let $f(X)=X^3 - aX - b \in \mathbb{Z}[X]$ be an irreducible polynomial over $\mathbb{Q}$ 
whose discriminant $D(f)$ is 
not a perfect square and such that either 
$v_{p}(a) < 2$ or $v_{p}(b) < 3$ for every prime number $p$.  Let $\alpha$ be a root of $f$ and let $K=\mathbb{Q}(\alpha)$. 

\begin{enumerate}
\item If $q \neq 3$ is a prime, then $q$ is totally ramified in $K$ if and only if $1 \leq v_{q}(b) \leq v_{q}(a)$.

\item The prime $3$ is totally ramified in $K$ if and only if one of the following conditions holds:

\begin{enumerate}
\item[(i)] $1 \leq v_{3}(a) \leq v_{3}(b)$,

\item[(ii)] $3 \mid a, a \not\equiv 3 \pmod {9}, 3 \nmid b \mbox{ and } b^{2} \not\equiv a+1 \pmod {9}$,

\item[(iii)] $a \equiv 3 \pmod {9}, 3 \nmid b \mbox{ and } b^{2} \not\equiv a+1 \pmod {27}$.
\end{enumerate}
\end{enumerate}
\end{lemma}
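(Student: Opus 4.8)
The plan is to read off the decomposition of each prime $p$ in $K=\mathbb{Q}(\alpha)$ from the $p$-adic factorisation of $f$, using the fact that $p$ is totally ramified in $K$ exactly when $f$ is irreducible over $\mathbb{Q}_p$ and the extension $\mathbb{Q}_p(\alpha)/\mathbb{Q}_p$ is totally ramified of degree $3$; equivalently, the $p$-adic Newton polygon of $f$ (after a suitable translation of $X$ when the residual polynomial is inseparable) must be a single segment whose slope has denominator $3$ and whose residual polynomial is separable. The hypothesis that $v_p(a)<2$ or $v_p(b)<3$ for every $p$ is precisely the $p$-minimality condition guaranteeing that no substitution $X\mapsto pX$ simplifies $f$, so that the polygon faithfully records the ramification of $p$ in $\mathcal{O}_K$ rather than spurious content in the coefficients. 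The discriminant $D(f)=4a^{3}-27b^{2}$ enters only as a consistency check on the valuation bookkeeping.

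For a prime $q\neq 3$ (part (1)) the ramification is tame and the first Newton polygon suffices. Its relevant vertices lie among $(0,v_q(b))$, $(1,v_q(a))$ and $(3,0)$, the coefficient of $X^{2}$ being $0$. Since $\deg f=3$ is prime, $q$ is totally ramified precisely when this polygon is the single segment from $(0,v_q(b))$ to $(3,0)$ whose slope $-v_q(b)/3$ has denominator $3$; such a segment has length equal to its denominator, so its residual polynomial is linear, hence separable, and produces a single prime of ramification index $3$ and residue degree $1$. The single-segment condition forces $(1,v_q(a))$ to lie on or above the segment, i.e. $v_q(a)\geq\tfrac{2}{3}v_q(b)$, while the denominator condition forces $3\nmid v_q(b)$. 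Here the minimality hypothesis does the bookkeeping: if $v_q(b)\geq 3$ and $v_q(b)\leq v_q(a)$ then $v_q(a)\geq 2$ and $v_q(b)\geq 3$ hold simultaneously, contradicting minimality; hence the inequalities collapse to $v_q(b)\in\{1,2\}$, for which $3\nmid v_q(b)$ is automatic and $v_q(a)\geq\tfrac{2}{3}v_q(b)$ is equivalent to $v_q(a)\geq v_q(b)$. This yields the criterion $1\leq v_q(b)\leq v_q(a)$.

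The prime $p=3$ (part (2)) is the crux, because the ramification is wild. Case (i), in which $3$ divides both $a$ and $b$, is the direct analogue of the tame situation: $f\equiv X^{3}\pmod 3$, the triple root sits at $0$, and whenever the valuations make the unshifted Newton polygon a single denominator-$3$ segment its residual polynomial is linear and $3$ is totally ramified. The substantive cases have $3\mid a$ and $3\nmid b$: then $f\equiv(X-b)^{3}\pmod 3$, so the first polygon is the flat segment carrying the inseparable residual polynomial $(X-b)^{3}$, and the test is inconclusive. To resolve the triple root I would translate by $X\mapsto X+b$ and study $g(Y)=f(Y+b)=Y^{3}+3bY^{2}+(3b^{2}-a)Y+b(b^{2}-a-1)$, whose polygon has vertices among $(0,v_3(b^{2}-a-1))$, $(1,v_3(3b^{2}-a))$, $(2,1)$ and $(3,0)$. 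Writing $m=v_3(b^{2}-a-1)$ (which is always $\geq 1$, since $3\mid a$ and $b^{2}\equiv 1\pmod 3$), total ramification is again equivalent to a single denominator-$3$ segment, and the two subcases are separated by $v_3(3b^{2}-a)$, which is controlled by $a\bmod 9$. When $a\not\equiv 3\pmod 9$ one has $v_3(3b^{2}-a)=1$, the point $(1,1)$ binds, the single-segment condition forces $m=1$, i.e. $b^{2}\not\equiv a+1\pmod 9$, and this gives (ii); when $a\equiv 3\pmod 9$ one has $v_3(3b^{2}-a)\geq 2$, the point $(2,1)$ binds instead, the single-segment requirement only forces $m\leq 3$, and discarding the integer-slope value $m=3$ leaves $m\leq 2$, i.e. $b^{2}\not\equiv a+1\pmod{27}$, giving (iii). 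In each surviving case the segment has length $3$ and denominator $3$, its residual polynomial is linear and separable, and $3$ is totally ramified.

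The main obstacle is exactly this wild analysis at $3$: the inseparability of $(X-b)^{3}$ means the first-order polygon cannot decide ramification, and one must pass to the translated polynomial $g(Y)$ and track its coefficients modulo higher powers of $3$. The delicate point is that the shape of the second polygon --- and hence whether the governing congruence is read modulo $9$ or modulo $27$ --- is dictated by the fine position of $a$ modulo $9$ through $v_3(3b^{2}-a)$, which is precisely the dichotomy between (ii) and (iii). This refined computation is what Theorem $1$ of \cite{nart} packages, so invoking it (as in \cite{kishi-miyake}) delivers the stated criteria directly.
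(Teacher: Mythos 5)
Since the paper states this lemma without proof (it is quoted from \cite{kishi-miyake}, who in turn cite Theorem 1 of \cite{nart}), your Newton-polygon derivation is the right kind of argument, and for part (1) and cases (ii)--(iii) of part (2) your computation is essentially the Llorente--Nart analysis and is correct in outline. But there is a genuine gap at case (2)(i), which you wave through as ``the direct analogue of the tame situation'' without deriving the stated inequality. If you actually run your single-segment test on the unshifted polygon with vertices among $(0,v_3(b))$, $(1,v_3(a))$, $(3,0)$, you find that $3$ is totally ramified exactly when $1\le v_3(b)\le v_3(a)$ (with $v_3(b)\in\{1,2\}$ once minimality is invoked, just as in part (1)) --- \emph{not} the printed condition $1\le v_3(a)\le v_3(b)$. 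And the printed condition is in fact false: $X^3-9X-3$ is Eisenstein at $3$, hence totally ramified, yet fails (i), (ii) and (iii) as stated (the last two because $3\mid b$); conversely $X^3-3X-9$ satisfies the printed (i), but writing $x=3u$ and applying Hensel's lemma to $3u^3-u-1$ at $u\equiv -1 \pmod 3$ produces a root in $\mathbb{Q}_3$ of valuation $1$, so $3$ is not totally ramified there. So (i) must read $1\le v_3(b)\le v_3(a)$; carried out honestly, your own method would have detected this transcription slip rather than certified the statement. (The slip is harmless to the paper: both propositions and the proof of the main theorem only ever apply the lemma with $3\nmid b$, so only (ii) and (iii) are used.)

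A second, smaller gap is your dismissal of $m=3$ in case (iii) ``because the slope is an integer.'' An integer-slope single segment is not automatically disqualified from total ramification: by your own opening criterion one must examine the residual polynomial and, if it is an inseparable cube, translate again --- and a further translation can perfectly well produce total ramification (for instance $(Y-3)^3-3$ has a single segment of slope $-1$ and is totally ramified at $3$). What actually saves the argument at $m=3$ is the residual polynomial itself: substituting $Y=3Z$ into $g$ and dividing by $27$ gives $Z^3+\bar b Z^2+\cdots$ over $\mathbb{F}_3$ with $\bar b\neq 0$, whereas a cube $(Z-c)^3=Z^3-c^3$ over $\mathbb{F}_3$ has vanishing quadratic coefficient; hence the residual polynomial is not an inseparable cube, so it is either separable, or irreducible (residue degree $3$), or splits into coprime parts, and in every case total ramification fails. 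The same one-line observation --- a residual triple root $c$ satisfies $3c=0$, hence $c=0$ because the $X^2$-coefficient of $f$ vanishes --- is also what legitimizes your assertion in part (1) that no translation is ever needed when $q\neq 3$; you gesture at it via ``the coefficient of $X^2$ being $0$,'' but it should be stated, since it is exactly the step that makes the first polygon decisive in the tame case.
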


\medskip

Using Lemma \ref{proved-n-kishi-miyake} and Lemma \ref{lemma-taken-from-kishi-miyake}, we construct two families of quadratic fields with class numbers divisible by $3$ as follows.

\medskip

\begin{proposition}\label{1st-class-number-divisibility}
For any non-zero integer $t$ with $t \not\equiv 0 \pmod {3}$, the class number of the quadratic field 
$\mathbb{Q}(\sqrt{3t(3888t^2 + 108t + 1)})$ is divisible by $3$.
\end{proposition}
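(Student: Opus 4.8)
The plan is to realize the field in the statement as $F=\mathbb{Q}(\sqrt{D(f)})$ for an explicit cubic $f$ and then invoke Lemma~\ref{proved-n-kishi-miyake}. Writing $w=108t+1$, I would take
\[
f(X)=X^{3}-3w^{2}X-\bigl(2w^{3}-4\bigr),
\]
so that $a=3w^{2}$ and $b=2w^{3}-4$. The point of this choice is that the top terms of $4a^{3}-27b^{2}$ cancel: since $4a^{3}=108w^{6}$ and $27b^{2}=108(w^{3}-2)^{2}=108w^{6}-432w^{3}+432$, we get $D(f)=4a^{3}-27b^{2}=432\,(w^{3}-1)$. Factoring $w^{3}-1=(w-1)(w^{2}+w+1)=108t\cdot 3(3888t^{2}+108t+1)$ yields $D(f)=216^{2}\cdot 3t(3888t^{2}+108t+1)$. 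As $216^{2}$ is a perfect square, $F=\mathbb{Q}(\sqrt{D(f)})$ is exactly the field in the statement. Moreover $3\nmid(3888t^{2}+108t+1)$ and $3\nmid t$, so $v_{3}(D(f))=7$ is odd; hence $D(f)$ is not a perfect square, $F$ is genuinely quadratic, and the standing hypothesis on $D(f)$ in Lemmas~\ref{proved-n-kishi-miyake} and~\ref{lemma-taken-from-kishi-miyake} is satisfied.

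Next I would check irreducibility and the side hypothesis of Lemma~\ref{lemma-taken-from-kishi-miyake}. For irreducibility, use the identity $X^{3}-3w^{2}X-2w^{3}=(X+w)^{2}(X-2w)$, so that $f(X)=(X+w)^{2}(X-2w)+4$; a rational (hence integral) root $n$ would force $s^{2}(s-3w)=-4$ with $s=n+w\in\mathbb{Z}$, whence $s\in\{0,\pm1,\pm2\}$, and each case leads only to $w=1$, i.e.\ $t=0$, which is excluded. Since $w=108t+1$ is coprime to $6$, we have $v_{2}(a)=0$ and $v_{3}(a)=1$, and the only primes with $v_{p}(a)\ge2$ are the primes $p\ge5$ dividing $w$, for which $v_{p}(b)=v_{p}(w^{3}-2)=0$. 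Thus $v_{p}(a)<2$ or $v_{p}(b)<3$ for every prime $p$, so Lemma~\ref{lemma-taken-from-kishi-miyake} is applicable.

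The heart of the argument, and the step I expect to be the main obstacle, is showing that no prime is totally ramified in $K=\mathbb{Q}(\alpha)$; the polynomial has been engineered precisely so that the delicate $p=3$ congruences fall on the unramified side. For a prime $q\ne3$: if $q\mid w$ then $v_{q}(a)=2v_{q}(w)\ge2$ while $v_{q}(b)=0$, and if $q\nmid w$ then $v_{q}(a)=0$; either way $1\le v_{q}(b)\le v_{q}(a)$ fails, so by part~(1) of Lemma~\ref{lemma-taken-from-kishi-miyake} $q$ is not totally ramified (the case $q=2$ is covered since $v_{2}(a)=0$). For $p=3$, from $w\equiv1\pmod{27}$ one computes $v_{3}(a)=1$, $3\nmid b$, $a\equiv3\pmod{27}$ and $b\equiv-2\pmod{27}$, so that $b^{2}\equiv4\equiv a+1\pmod{27}$. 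Consequently condition~(i) fails because $v_{3}(a)=1>0=v_{3}(b)$, condition~(ii) fails because $a\equiv3\pmod 9$, and condition~(iii) fails because $b^{2}\equiv a+1\pmod{27}$. Hence $3$ is not totally ramified either, so by Lemma~\ref{proved-n-kishi-miyake} no finite prime of $F$ ramifies in the splitting field $E$ of $f$. Since $E/F$ is a cyclic cubic extension and $[E:F]=3$ is odd, $E/F$ is also unramified at the infinite places, hence everywhere unramified, and class field theory gives $3\mid h_{F}$, which is the assertion of the proposition.
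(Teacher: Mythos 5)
Your proof is correct, and structurally it follows the same route as the paper --- realize the field as $\mathbb{Q}(\sqrt{D(f)})$ for an explicit cubic trinomial, rule out totally ramified primes with Lemma~\ref{lemma-taken-from-kishi-miyake}, pass to the splitting field with Lemma~\ref{proved-n-kishi-miyake}, and finish by class field theory --- but with a genuinely different auxiliary polynomial. The paper uses the much leaner $f(X)=X^{3}-3(108t+1)X-2$, whose discriminant $108\bigl((108t+1)^{3}-1\bigr)$ factors through the telescoping $(108t+1)^{3}-1=108t\cdot 3\,(3888t^{2}+108t+1)$ and so has the same squarefree kernel as your $432(w^{3}-1)$; the cancellation you engineered by hand in $4a^{3}-27b^{2}$ comes for free there. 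The payoff of the constant term $b=2$ is that all your auxiliary verifications collapse: irreducibility is immediate from the rational root theorem (an integer root must divide $2$), the standing hypothesis that $v_{p}(a)<2$ or $v_{p}(b)<3$ is trivial, and condition (1) of Lemma~\ref{lemma-taken-from-kishi-miyake} can only be in play at $q=2$, where $v_{2}(a)=0$. Your larger coefficients force the extra steps you correctly supply: the identity $f(X)=(X+w)^{2}(X-2w)+4$ with the case analysis $s\in\{0,\pm1,\pm2\}$ for irreducibility, and the separate treatment of primes dividing $w$ both for the side hypothesis and for part (1). At $p=3$ the two proofs run on exactly the same engine: in both one has $a\equiv 3\pmod{27}$ and $b^{2}\equiv 4\equiv a+1\pmod{27}$ (your $b\equiv-2\pmod{27}$ versus the paper's $b=2$), so cases (i)--(iii) all fail and $3$ is not totally ramified. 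I checked your computations ($D(f)=216^{2}\cdot 3t(3888t^{2}+108t+1)$, $v_{3}(D(f))=7$ odd, the ramification analysis at every prime), so what you have is a valid alternative construction; it establishes the same proposition at the cost of noticeably more bookkeeping than the paper's choice of cubic.
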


\begin{proof}
If $t \not\equiv 0 \pmod {3}$ then $3t(3888t^2 + 108t + 1)$ is divisible by 3 but not by 9, hence is not 
a perfect square, so $\mathbb{Q}(\sqrt{3t(3888t^2 + 108t + 1)})$ is indeed a quadratic field.  
The polynomial $f(X)=X^3 -3\cdot (108t + 1)X -2 \in \mathbb{Z}[X]$ is irreducible by the rational root theorem
and has discriminant $D(f)= 2^{2}\cdot 3^{5}\cdot t\cdot (3888t^2 + 108t + 1)$.

Let $\alpha$ be a root of $f$ and let $K=\mathbb{Q}(\alpha)$. 
For each prime number $p$, an easy check using Lemma \ref{lemma-taken-from-kishi-miyake} shows that $p$ is not totally ramified in $K$.
Hence, by Lemma \ref{proved-n-kishi-miyake}, we conclude that 
the splitting field $E$ of $f(x)$ over $\mathbb{Q}$ is an unramified cyclic cubic extension of $F = \mathbb{Q}(\sqrt{3t(3888t^2 + 108t + 1)})$, so
the class number of $F$ is divisible by 3. 
\end{proof}

\medskip

Following a similar line of argument, we provide another family of quadratic fields with class numbers divisible by $3$ in the next proposition.

\begin{proposition}\label{2nd-class-number-divisibility}
Let $t \geq 1$ be an integer. Then the class number of the imaginary quadratic field $\mathbb{Q}(\sqrt{1 - 2916t^3})$ is divisible by $3$.
\end{proposition}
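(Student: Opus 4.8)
The plan is to mirror the construction in Proposition \ref{1st-class-number-divisibility}, producing an explicit cubic polynomial $f(X)=X^3-aX-b$ whose discriminant $D(f)$ is (up to square factors) a constant multiple of $1-2916t^3$, so that $F=\mathbb{Q}(\sqrt{D(f)})=\mathbb{Q}(\sqrt{1-2916t^3})$, and then to verify via Lemma \ref{lemma-taken-from-kishi-miyake} that no prime $p$ is totally ramified in $K=\mathbb{Q}(\alpha)$. Once that is established, Lemma \ref{proved-n-kishi-miyake} gives that the splitting field $E$ is an unramified cyclic cubic extension of $F$, and class field theory forces $3 \mid h_{F}$. Observe that $1-2916t^3 = 1 - 4\cdot 729 \cdot t^3$, which suggests trying $f(X)=X^3 - 3tX - 1$ or a close variant: for a depressed cubic $X^3-aX-b$ one has $D(f)=4a^3-27b^2$, and with $a=3\cdot(\text{something involving }t)$ and $b$ a small constant one can arrange $D(f)$ to be $1-2916t^3$ times a perfect square. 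I would first pin down the exact coefficients by matching $4a^3-27b^2$ to $c^2(1-2916t^3)$ for some integer $c$; a natural guess is $f(X)=X^3-3tX-1$, giving $D(f)=108t^3-27=27(4t^3-1)$, or a rescaled version designed to land precisely on the stated radicand. Since $1-2916t^3 = 1-108\cdot 27 t^3$, the candidate $f(X)=X^3-9tX-1$ with $D(f)=4\cdot 729t^3-27=27(108t^3-1)$, or its negative, should be the intended one; I would select coefficients so that $D(f)$ differs from $1-2916t^3$ only by a square.

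Next I would check irreducibility: for a monic cubic with constant term $\pm 1$, the rational root theorem restricts possible rational roots to $\pm 1$, so irreducibility reduces to a finite congruence check confirming $f(\pm 1)\neq 0$, which holds outside finitely many $t$; and I would note that $D(f)$ is not a perfect square (since $1-2916t^3<0$ for $t\geq 1$, the field is genuinely imaginary quadratic and $D(f)$ is certainly not a square). I would also verify the technical hypothesis of Lemma \ref{lemma-taken-from-kishi-miyake}, namely that for every prime $p$ either $v_p(a)<2$ or $v_p(b)<3$; with $b=\pm 1$ this is immediate since $v_p(b)=0<3$ for all $p$.

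The main work, as in the previous proposition, is the prime-by-prime ramification analysis. For primes $q\neq 3$, total ramification requires $1\leq v_q(b)\leq v_q(a)$; since $b=\pm 1$ forces $v_q(b)=0$, no such $q$ can be totally ramified, and this case is dispatched immediately. The genuine obstacle is the prime $3$: I expect $3\mid a$ to hold by design (so condition (i) is the relevant alternative, and conditions (ii)--(iii) require $3\nmid a$ and so are vacuous here), and I must confirm that case (i), $1\leq v_3(a)\leq v_3(b)$, fails. Since $v_3(b)=v_3(\pm 1)=0$ while $v_3(a)\geq 1$, the inequality $1\leq v_3(a)\leq v_3(b)=0$ is impossible, so $3$ is not totally ramified either. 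This is precisely the mechanism that makes the constant term $\pm 1$ the clever choice: it simultaneously kills all the $q\neq 3$ cases and case 2(i) for $q=3$ in one stroke.

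Thus the hard part is really just the bookkeeping to confirm that the chosen $f$ has $D(f)$ equal to $1-2916t^3$ up to a square factor and that $f$ is irreducible; the ramification check then collapses because $b=\pm 1$ annihilates every totally-ramified criterion in Lemma \ref{lemma-taken-from-kishi-miyake}. Having ruled out total ramification at every prime, Lemma \ref{proved-n-kishi-miyake} yields that $E/F$ is unramified at all finite primes, and since $E/F$ is a cyclic cubic extension unramified at infinity as well (it is Galois of odd degree), class field theory produces an unramified degree-$3$ abelian extension of $F$, whence $3\mid h_{\mathbb{Q}(\sqrt{1-2916t^3})}$, completing the proof.
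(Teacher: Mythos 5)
Your proposal has a genuine gap, and it is the central one: the paper's proof does \emph{not} directly construct an unramified cyclic cubic extension of the imaginary field $\mathbb{Q}(\sqrt{1-2916t^3})$, and your plan to do so cannot be completed in the form you describe. The paper takes $f(X)=X^3-27tX-1$, whose discriminant $D(f)=27(2916t^3-1)$ is \emph{positive}, so the quadratic field produced is the \emph{real} field $F=\mathbb{Q}(\sqrt{3(2916t^3-1)})$; the unramified-extension argument gives $3\mid h_F$, and only then does Scholz's reflection theorem (Theorem \ref{scholz-reflection}) transfer the divisibility to $\mathbb{Q}(\sqrt{-3\cdot 3(2916t^3-1)})=\mathbb{Q}(\sqrt{1-2916t^3})$. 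You never invoke reflection, and your attempted matching $4a^3-27b^2=c^2(1-2916t^3)$ is impossible in the shape you propose: with $b=\pm 1$ constant and $a$ linear in $t$, comparing the coefficients of $t^3$ and the constant terms forces $4a^3=-2916c^2t^3$ (so $a<0$, say $a=-9te^2$ with $c=e^3$) and simultaneously $-27b^2=c^2$, a contradiction. Neither of your explicit candidates lands on the right field: $X^3-9tX-1$ has $D(f)=27(108t^3-1)$, which is positive and not a square multiple of $1-2916t^3$ (nor of $3(2916t^3-1)$), and ``or its negative'' is not an option, since $D(f)$ is determined by $f$. As written, your argument never produces the field $\mathbb{Q}(\sqrt{1-2916t^3})$ at all.

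There is also a secondary error in your ramification analysis: Lemma \ref{lemma-taken-from-kishi-miyake}(2)(ii) applies precisely when $3\mid a$ (you assert it requires $3\nmid a$ and declare (ii)--(iii) vacuous), so the check at $p=3$ does not ``collapse'' from $b=\pm 1$ alone. For the paper's choice $a=27t$, $b=1$, one must verify: (i) fails since $v_3(b)=0$; (ii) fails because $a\equiv 0\pmod 9$, $3\nmid b$, and $b^2=1\equiv 27t+1\equiv a+1\pmod 9$, so the required non-congruence $b^2\not\equiv a+1\pmod 9$ does not hold; (iii) fails since $a\not\equiv 3\pmod 9$. The congruence $b^2\equiv a+1\pmod 9$ is a genuine design constraint on the coefficients (e.g.\ for $a=3t$ with suitable $t$, condition (ii) or (iii) can hold and $3$ \emph{is} totally ramified, ruining the unramifiedness of $E/F$); it is not an automatic consequence of taking constant term $\pm 1$. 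Your treatment of the primes $q\neq 3$ via $v_q(b)=0$ is correct and matches the paper.
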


\begin{proof}
Since $1 - 2916t^{3} < 0$ for $t \geq 1$, the field $\mathbb{Q}(\sqrt{1 - 2916t^{3}})$ is indeed a quadratic field.
The polynomial $f(X) = X^3 - 27tX - 1 \in \mathbb{Z}[X]$ is irreducible by the rational root theorem and has 
discriminant $D(f)=3^3 (2916t^3 - 1).$

Let $F = \mathbb{Q}(\sqrt{D(f)})=\mathbb{Q}(\sqrt{3\cdot (2916t^3 - 1)})$. Since $\gcd(3, 2916t^{3} - 1)=1$, 
the integer $3\cdot (2916t^{3} - 1)$ is not a perfect square, so $\mathbb{Q}(\sqrt{D(f)})$ is also a quadratic field. 

Let $\alpha$ be a root of $f(x)$ and let $K=\mathbb{Q}(\alpha)$. 
As before, for each prime number $p$, an easy check using Lemma \ref{lemma-taken-from-kishi-miyake} shows that 
$p$ is not totally ramified in $K$. By Lemma \ref{proved-n-kishi-miyake} 
the splitting field $E$ of $f(x) $ over $\mathbb{Q}$ is an unramified cyclic cubic extension of
$F = \mathbb{Q}(\sqrt{3\cdot (2916t^3 - 1)})$ so $3$ divides the class number of the real quadratic field $F$.
By Theorem \ref{scholz-reflection}, we conclude that the class number of the imaginary quadratic field 
$\mathbb{Q}(\sqrt{-3\cdot 3\cdot (2916t^{3} - 1)}) = \mathbb{Q}(\sqrt{1 - 2916t^{3}})$ is also divisible by $3$. 
\end{proof}

\section{Proof of Theorem \ref{main-theorem}}

Let $k \geq 1$ be any fixed cube-free integer such that $k \equiv 1 \pmod 9$ and $\gcd(k, 7\cdot 571)=1$ and 
for an integer $t \geq 1$, consider the polynomial $f_{t}(X)=X^3 - 27tX - k \in \mathbb{Z}[X]$. 

If $f_{t}$ is reducible over $\mathbb{Q}$ for some $t$, then it has an integer root 
dividing $k$, so there are at most finitely many values of $t$ for which $f_{t}$ is reducible over $\mathbb{Q}$.
Also, the discriminant $D(f_{t})=27\cdot (2916t^{3} - k^{2})$ is a polynomial in $t$ which, since 
$k \neq 0$, has distinct roots in $\overline{\mathbb{Q}}$. 
By Siegel's theorem on integral points (see \cite{silverman}, Chapter 9, Theorem 4.3), $D(f_{t})$ is 
a perfect square for finitely many integers $t$.  
It follows that there is a positive integer $T > k$ such that $f_{t}$ is irreducible and $D(f_{t})$ 
is not a perfect square for all $t \ge T$.

Let $$\mathcal{N}=\{n \in \mathbb{Z} : n \equiv 2 \ (\text{mod } {9}) ,  n \equiv 1 \ (\text{mod } {k}) \text{ and } n \ge T  \}.$$

\noindent
For $n \in \mathcal{N}$, we have 
\begin{equation}\label{tp}
27\cdot n\cdot (3888n^2 + 108n + 1) \equiv 3^{3}\cdot 7\cdot 571 \not\equiv 0 \pmod {k}.
\end{equation}

Now, for $n \in \mathcal{N}$, let 
$$
t_{n}=n\cdot (3888n^2 + 108n + 1)
$$ 
and consider the polynomial 
$$
f_{t_{n}}(X)=X^3 - 27 t_n X - k
$$ 
over $\mathbb{Q}$. Since $t_{n} > T$, 
it follows that $f_{t_{n}}$ is irreducible over $\mathbb{Q}$ and $D(f_{t_{n}})$ is not a perfect square. 
Now, using \eqref{tp}, Lemma \ref{proved-n-kishi-miyake} and a quick check of the conditions of 
Lemma \ref{lemma-taken-from-kishi-miyake} as before, we conclude the splitting field $E$ of $f_{t_{n}}$ over 
$\mathbb{Q}$ is an unramified cyclic cubic extension over $\mathbb{Q}(\sqrt{D(f_{t_{n}})})$. Therefore, $3$ divides 
the class number of $\mathbb{Q}(\sqrt{D(f_{t_{n}})})$.\smallskip

Note that 
$\mathbb{Q}(\sqrt{D(f_{t_{n}})})=\mathbb{Q}(\sqrt{27\cdot (2916t_{n}^3 - k^2)})=\mathbb{Q}(\sqrt{3\cdot (2916t_{n}^3 - k^2)})$ 
is a real quadratic field. Consequently, Theorem \ref{scholz-reflection} yields that $3$ divides the 
class number of the imaginary quadratic field $\mathbb{Q}(\sqrt{-3\cdot 3\cdot (2916t_{n}^3 - k^2)})=\mathbb{Q}(\sqrt{k^2 - 2916t_{n}^3})$.

Also, Proposition \ref{1st-class-number-divisibility} implies that $3$ divides the class number of the 
real quadratic field $\mathbb{Q}(\sqrt{3t_{n}})$. Again from Theorem \ref{scholz-reflection}, 
we obtain that $3$ divides the class number of the imaginary quadratic field 
$\mathbb{Q}(\sqrt{-3\cdot 3t_{n}})=\mathbb{Q}(\sqrt{-t_{n}}) = \mathbb{Q}(\sqrt{-2916t_{n}^3})$. 
Together with Proposition \ref{2nd-class-number-divisibility} this shows that
$3$ divides the class numbers of $\mathbb{Q}(\sqrt{-2916t_{n}^3})$, 
$\mathbb{Q}(\sqrt{-2916t_{n}^3 + 1})$ and $\mathbb{Q}(\sqrt{-2916t_{n}^3 + k^2})$.

The family $ \mathcal{Q} = \{\mathbb{Q}(\sqrt{-t_{n}}) : n \in \mathcal{N}\} $
is infinite since there are infinitely many primes $q$ in $\mathcal{N}$ by Dirichlet's theorem 
and $\mathbb{Q}(\sqrt{-t_{q}})$ is ramified at $q$, completing the proof. $\hfill\Box$

\medskip

{\bf Acknowledgements.} It is a pleasure to thank Dr. Iizuka for sending us the manuscript \cite{ikn-preprint} on request. We are grateful to Prof. R. Thangadurai for his support and encouragement throughout the project. We sincerely thank him for going through the manuscript several times and giving valuable comments. We are greatly thankful to Prof. K. Srinivas for his careful reading and valuable comments on this paper. We gratefully acknowledge the anonymous referee for his/her valuable remarks that hugely improved the readability of the paper.


\end{document}